\renewcommand*\env@matrix[1][*\c@MaxMatrixCols c]{%
  \hskip -\arraycolsep
  \let\@ifnextchar\new@ifnextchar
  \array{#1}}
\tikzset{
	ch/.style={circle,draw,on chain,inner sep=2pt},
	chj/.style={ch,join},
	every path/.style={shorten >=4pt,shorten <=4pt}
	}
\numberwithin{equation}{subsection}
\newtheorem{thm}{Theorem}[section]
\newtheorem{lem}[thm]{Lemma}
\newtheorem{rem}[thm]{Remark}
\newtheorem{prou}[thm]{Definition}
\newtheorem{example}[thm]{Example}
\journal{~~~~~~~~~~}
\begin{document}
\begin{CJK*}{GBK}{song}
\begin{spacing}{1.08}
\begin{frontmatter}



\title{The geometry  connectivity  of  hypergraphs}
\author[label1]{Chunli Deng}
\author[label2]{Lizhu Sun}
\author[label1,label2]{Changjiang Bu  \corref{cor}}\ead{buchangjiang@hrbeu.edu.cn}

\cortext[cor]{Corresponding author: {Changjiang Bu}}
\address{
\address[label1]{College of Automation, Harbin Engineering University, Harbin 150001, PR China}
\address[label2]{College of Mathematical Sciences, Harbin Engineering University, Harbin 150001, PR China}
}

\date{\today}



\begin{abstract}
Let $\mathcal{G}$ be a $k$-uniform hypergraph, $\mathcal{L}_{\mathcal{G}}$ be its Laplacian tensor.
And $\beta( \mathcal{G})$ denotes the maximum number of linearly independent nonnegative eigenvectors  of   $\mathcal{L}_{\mathcal{G}}$ corresponding to the   eigenvalue $0$.
In this paper, $\beta( \mathcal{G})$ is called  the geometry  connectivity of  $\mathcal{G}$.
We show that the   number of connected components of   $\mathcal{G}$ equals  the geometry  connectivity $\beta( \mathcal{G})$.

\end{abstract}

\begin{keyword}
Hypergraph, Connectivity, Tensor, Eigenvector   \\
\emph{AMS classification:} 15A69,  15A18, 05C65
\end{keyword}

\end{frontmatter}

\section{Introduction}

In 1973,  Fiedler pointed that a graph $G$ is connected if and only if the second smallest eigenvalue of Laplacian matrix  $L_G$  is more than zero, this eigenvalue is called \textit{ algebraic connectivity} of graph $G$, denoted by $\alpha(G)$ \cite{fiedler1973}. Usually, this conclusion is called the Fiedler Theorem.
\begin{thm}\cite{fiedler1973}\label{Fiedler1}
A graph   $G$ is connected if and only if $\alpha(G)>0$.
\end{thm}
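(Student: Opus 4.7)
The plan is to prove the theorem by analyzing the null space of the Laplacian matrix $L_G = D - A$, where $D$ is the diagonal degree matrix and $A$ is the adjacency matrix of $G$. The strategy is classical: show that the multiplicity of the eigenvalue $0$ of $L_G$ equals the number of connected components of $G$, so connectivity is equivalent to $\alpha(G) > 0$.

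First, I would establish that $L_G$ is positive semidefinite by means of the quadratic form identity
\begin{equation*}
x^{T} L_G\, x \;=\; \sum_{\{i,j\} \in E(G)} (x_i - x_j)^{2} \;\ge\; 0
\end{equation*}
valid for every $x \in \mathbb{R}^{n}$. Consequently every eigenvalue of $L_G$ is nonnegative, and $\alpha(G) \geq 0$.

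Second, since $L_G\, \mathbf{1} = 0$ with $\mathbf{1}$ the all--ones vector, $0$ is always the smallest eigenvalue, so $\alpha(G)$ is precisely the second smallest eigenvalue. The main step is to identify the null space of $L_G$: from the quadratic form above, $L_G x = 0$ if and only if $x_i = x_j$ for every edge $\{i,j\} \in E(G)$, i.e. $x$ is constant on each connected component of $G$. If $G$ has $c$ connected components $V_1, \dots, V_c$, then the indicator vectors $\mathbf{1}_{V_1}, \dots, \mathbf{1}_{V_c}$ form a basis of the null space, so the geometric (and hence algebraic, since $L_G$ is symmetric) multiplicity of the eigenvalue $0$ equals $c$.

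Finally, $G$ is connected if and only if $c = 1$, which is equivalent to the multiplicity of $0$ being exactly $1$, which by the arrangement of eigenvalues is equivalent to $\alpha(G) > 0$. The main obstacle, and the structural heart of the argument, is the step identifying the null space of $L_G$ with the span of indicator functions of connected components; once this is in place, the equivalence follows formally. This line of reasoning is also the one that will have to be generalized to the tensor / hypergraph setting in order to establish the main result advertised in the abstract, where the linear null space is replaced by the cone of nonnegative eigenvectors of $\mathcal{L}_{\mathcal{G}}$ at eigenvalue $0$.
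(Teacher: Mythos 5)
Your proof is correct and complete: the quadratic form identity, the identification of the null space of $L_G$ with the span of the component indicator vectors, and the resulting equivalence between connectivity and $\alpha(G)>0$ are all standard and correctly executed. Note, however, that the paper does not prove this statement at all --- it is quoted verbatim from Fiedler's 1973 paper as background --- so there is no in-paper proof to compare against. What is worth observing is how your argument relates to the paper's actual contribution. The structural heart of your proof, as you say, is that the null space of $L_G$ is spanned by the indicators $\mathbf{1}_{V_1},\dots,\mathbf{1}_{V_c}$; the paper's Theorem 3.2 reproduces exactly this decomposition for hypergraphs (the vectors $\textbf{e}_{V_1},\dots,\textbf{e}_{V_r}$), but it cannot obtain the single-component case from a quadratic form, since for a $k$-order Laplacian tensor with $k\ge 3$ the relevant form $\mathcal{L}_{\mathcal{G}}x^{k}$ is not a sum of squares and the eigenvalue $0$ can have algebraic multiplicity exceeding the number of components (the paper's Example 2.6). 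Instead the paper restricts attention to \emph{nonnegative} eigenvectors and replaces your positive-semidefiniteness step by Perron--Frobenius theory for weakly irreducible nonnegative tensors (Lemmas 2.1--2.4, used in Lemma 3.1). So your final sentence is slightly optimistic: the null-space identification does not carry over by replacing the null space with the cone of nonnegative eigenvectors in any formal way; the one-component uniqueness statement requires a genuinely different tool in the tensor setting.
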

The Fiedler Theorem give the tight relation to the fundamental graph property and eigenvalues of graphs,  it attracts much attention and huge literatures followed.
In 1975, Fiedler further studied the  algebraic connectivity of graph $G$ in \cite{Fiedler1975}.
He showed that the eigenvector corresponding to $\alpha(G)$ induces partitions of the vertices of $G$ that are natural
connected clusters \cite{V.Nikiforov,jinfangrong}.
This property is important and efficient for partitioning of graphs.
After the publication of \cite{Fiedler1975},
the eigenvector corresponding to $\alpha(G)$ 
has been adopted by computer scientists and used in   algorithmic partitioning applications, see \cite{17,19}.

Further, the Fiedler  Theorem  can be generalized as follows.

\begin{thm}\cite{introduction,Spectra-of-graphs}\label{Fiedler2}
The   number of connected components of a
graph $G$ equals the algebraic multiplicity of  Laplacian eigenvalue 0 of $G $.
\end{thm}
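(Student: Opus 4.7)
The plan is to combine the block-diagonal decomposition of $L_G$ induced by the connected components of $G$ with Theorem \ref{Fiedler1} applied componentwise. Writing $c$ for the number of connected components, I will show that the algebraic multiplicity of $0$ as an eigenvalue of $L_G$ is exactly $c$.

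First I would decompose $G$ into its connected components $G_1,\ldots,G_c$ and relabel the vertices so that those of each $G_i$ form a contiguous block. By the definition of the Laplacian, an entry $(L_G)_{uv}$ with $u$ and $v$ in different components vanishes: no edge joins them, and the degree of $u$ counts only edges inside its own component. Consequently,
\begin{equation*}
L_G \;=\; L_{G_1}\oplus L_{G_2}\oplus\cdots\oplus L_{G_c}
\end{equation*}
is block diagonal, so its characteristic polynomial factors as the product of those of the $L_{G_i}$, and the algebraic multiplicity of $0$ for $L_G$ is the sum of the algebraic multiplicities of $0$ for the individual $L_{G_i}$.

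Next I would apply Theorem \ref{Fiedler1} to each component. Each $L_{G_i}$ is real symmetric and positive semidefinite, and the all-ones vector on $V(G_i)$ lies in its kernel, so $0$ is an eigenvalue. Ordering the eigenvalues $0=\lambda_1(L_{G_i})\le\lambda_2(L_{G_i})\le\cdots$, connectedness of $G_i$ together with Theorem \ref{Fiedler1} gives $\alpha(G_i)=\lambda_2(L_{G_i})>0$, which forces the algebraic multiplicity of $0$ in $L_{G_i}$ to be exactly $1$. Summing over the $c$ components yields multiplicity $c$ for $L_G$, as desired.

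The only step requiring any real care is the block decomposition, which is immediate from the definition of $L_G$ and the absence of edges between components; the rest reduces to Theorem \ref{Fiedler1} together with the elementary fact that the characteristic polynomial of a block-diagonal matrix is the product of those of its blocks. I do not expect a genuine obstacle here. The actual difficulty, and the motivation for the present paper, is reproducing this style of argument for a $k$-uniform hypergraph $\mathcal{G}$, where eigenvectors of the Laplacian tensor $\mathcal{L}_{\mathcal{G}}$ no longer form a vector space and so the matrix notion of algebraic multiplicity must be replaced by the geometric quantity $\beta(\mathcal{G})$.
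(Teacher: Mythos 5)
Your argument is correct, but note that the paper does not prove this statement at all: Theorem \ref{Fiedler2} is quoted from \cite{introduction,Spectra-of-graphs} as a known classical fact, so there is no internal proof to compare against. What you have written is the standard textbook derivation, and it is sound: the Laplacian is block diagonal with respect to the connected components, the characteristic polynomial factors accordingly, and symmetry plus positive semidefiniteness of each block $L_{G_i}$ lets you read off from $\lambda_2(L_{G_i})=\alpha(G_i)>0$ that $0$ has multiplicity exactly one in each block. The only wrinkle worth a sentence is the degenerate case of an isolated vertex, where $L_{G_i}$ is the $1\times 1$ zero matrix and $\lambda_2$ does not exist; there the multiplicity of $0$ is $1$ trivially, so the count is unaffected, but your appeal to Theorem \ref{Fiedler1} does not literally apply to that block. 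It is also worth observing that your proof is structurally the same as the paper's own proof of its hypergraph analogue (Theorem \ref{th2}): decompose by components, invoke the connected case on each block (there Lemma \ref{th1} in place of Theorem \ref{Fiedler1}), and assemble the indicator vectors $\textbf{e}_{V_1},\ldots,\textbf{e}_{V_r}$ --- with the key difference, which you correctly identify, that in the tensor setting the eigenvectors for $0$ no longer form a subspace, so the algebraic multiplicity must be replaced by the count $\beta(\mathcal{G})$ of linearly independent \emph{nonnegative} eigenvectors.
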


The connectivity and the   number of connected components  hypergraphs are important topics as well.
But for the $k$-uniform hypergraph $\mathcal{G}$ $(k\geq3)$, from the Example 2.6,  we can see that the Theorem \ref{Fiedler1} and Theorem \ref{Fiedler2} can't be generalized to hypergraphs directly.
In \cite{Qilaplacian,liwei1,buperron}, the authors characterized the connectivity of hypergraphs by subtensors of the Laplacian tensor.

\vspace{3mm}
In this paper, we study the connectivity and the   number of connected components of $k$-uniform hypergraphs in terms of eigenvectors  of Laplacian tensor.
We give the concept of  the (Z-)geometry  connectivity  of $k$-uniform hypergraph $\mathcal{G}$ as follows.
\begin{prou}
The (Z-)geometry  connectivity  of $k$-uniform hypergraph $\mathcal{G}$, denoted by $\beta(\mathcal{G})$ ($\beta_Z(\mathcal{G})$),  is  defined as the maximum number of linearly independent nonnegative (Z-)eigenvectors  of Laplacian tensor  corresponding to the   (Z-)eigenvalue $0$.
\end{prou}
 We show that the   number of connected components of a $k$-uniform hypergraph $\mathcal{G}$ is  the (Z-)geometry  connectivity $\beta( \mathcal{G})$ ($\beta_Z(\mathcal{G})$).

\section{Preliminaries}

In this section, we introduce some concepts and lemmas.


For a positive integer $n$, denote $[n]=\{1,2,\ldots,n\}$.
Let  $\mathbb{R}^{[k,n]}$ and $\mathbb{R}_+^{[k,n]}$ denote the set of $k$-order $n$-dimensional  real tensors and nonnegative tensors, respectively. When $k=2$, the $\mathbb{R}^{[2,n]}$ (resp. $\mathbb{R}_+^{[2,n]}$) is  the set of all $n\times n$ real (resp. nonnegative) matrices.
Let $\mathbb{C}^{n}$, $\mathbb{R}^{n}$ and $\mathbb{R}_+^{n}$ denote the set of  $n$-dimensional complex vectors, real vectors and nonnegative vectors, respectively.

A tensor $\mathcal{A}=(a_{i_1{i_2} \cdots {i_k}})\in \mathbb{R}^{[k,n]}$ is called \textit{symmetric} if its each entry $a_{i_1{i_2} \cdots {i_k}}$ is invariant under any permutation of $i_1,{i_2}, \ldots ,{i_k}$.

A tensor $\mathcal{I}=(\delta_{i_1\cdots i_k})\in \mathbb{R}^{[k,n]}$ is called the \textit{identity tensor} if whose entry $\delta_{i\cdots i}=1$ for all $i$ and zero otherwise.

For  $\mathcal{A}=(a_{i_{1}i_{2}\cdots i_{m}}) \in \mathbb{R}_{+}^{[m,n]}$,
it is associated to a directed graph $G( \mathcal{A}) = (V(\mathcal{A}),E(\mathcal{A}))$, where $V(\mathcal{A}) = \{1,2, \ldots, n\}$ and  $ E(\mathcal{A})=\{ (i,j):a_{ii_2\cdots i_m }> 0,j\in \{i_2, \ldots, i_m\}\}$.
A nonnegative tensor $\mathcal{A} \in \mathbb{R}_{+}^{[m,n]}$
 is called \textit{weakly irreducible} if the associated directed graph
$G(\mathcal{A})$ is strongly connected (see  \cite{Friedland2013Perron,Pearson2014On}).

In 2005, the eigenvalue of tensors was proposed by Qi \cite{Qi05} and Lim \cite{lim}, independently.
Let $\mathcal{A}=(a_{i_{1}i_{2}\cdots i_{m}}) \in \mathbb{R}^{[m,n]}$.
If there exist $\lambda\in \mathbb{C}$ and a nonzero vector $x=(x_{1},\ldots,x_{n})^{\mathrm{T}} \in \mathbb{C}^{n}$ such that
\begin{align}\label{eq1'}
\mathcal{A}x^{m - 1}  = \lambda x^{\left[ {m - 1} \right]},\tag{2.1}
\end{align}
then $\lambda$ is called an \textit{eigenvalue} of $\mathcal{A}$ and $x$ is called an \textit{eigenvector} of $\mathcal{A}$ corresponding to $\lambda$, where
$\mathcal{A}x^{m - 1}\in \mathbb{C}^n$,
$(\mathcal{A} x^{m-1})_{i}=\sum\limits_{{i_2}, \ldots ,{i_m} =1}^{n} {{a_{i{i_2} \cdots {i_m}}}{x_{{i_2}}} \cdots {x_{{i_m}}}}$, $i\in[n]$, and $x^{[m-1]}=(x_{1}^{m-1}, \ldots ,x_{n}^{m-1})^{\mathrm{T}}$.
The $\rho(\mathcal{A})=\max\{|\lambda|: \lambda~ \text{is an eigenvalue of}~ \mathcal{A}\}$ is called the \textit{spectral radius} of  $\mathcal{A}$.
If there exist $\lambda\in \mathbb{R}$ and  a nonzero vector $x \in \mathbb{R}^{n}$ such that
\begin{align}\label{eq2'}
\mathcal{A}x^{m - 1}  = \lambda  x~~\text{and}~~x^{\mathrm{T}}x=1,\tag{2.2}
\end{align}
then $\lambda$ is called a \textit{Z-eigenvalue} of $\mathcal{A}$ and $x$ is called a \textit{Z-eigenvector} of $\mathcal{A}$ corresponding to $\lambda$.
Since the work of Qi \cite{Qi05} and Lim \cite{lim},
the research on eigenvalues of tensors and its applications has attracted much attention (see \cite{shu1,shu2,dengsto}).

In \cite{Friedland2013Perron,chperron,yy,yy2}, Perron-Frobenius theory of tensors were established. Next, we introduce some results on Perron-Frobenius theory of tensors that we used in this paper.

\begin{lem}\cite{yy}\label{non}
Let $\mathcal{A}\in \mathbb{R}_+^{[m,n]}$. If some eigenvalue of $\mathcal{A}$ has a positive eigenvector corresponding
to it, then this eigenvalue must be $\rho(\mathcal{A})$.
\end{lem}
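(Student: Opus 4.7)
My plan is to obtain $\lambda = \rho(\mathcal{A})$ by showing that every eigenvalue $\mu$ of $\mathcal{A}$ satisfies $|\mu|\leq\lambda$; combined with the trivial bound $\lambda\leq\rho(\mathcal{A})$ from the definition of spectral radius, this forces equality. The nonnegativity of $\mathcal{A}$ together with the positivity of the prescribed eigenvector will be exploited twice: once to pass to absolute values, and once to carry out a scaling comparison.

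First I would fix the positive eigenvector $x>0$ associated with $\lambda$, so that $\mathcal{A}x^{m-1}=\lambda x^{[m-1]}$. The left-hand side is entrywise nonnegative and $x^{[m-1]}>0$, so $\lambda\geq 0$. Now let $(\mu,y)$ be any eigenpair with $y\in\mathbb{C}^n\setminus\{0\}$; writing $|y|=(|y_1|,\ldots,|y_n|)^{\mathrm{T}}$ and applying the triangle inequality entrywise to $\mathcal{A}y^{m-1}=\mu y^{[m-1]}$ gives
\begin{equation*}
|\mu|\,|y_i|^{m-1} = \bigl|(\mathcal{A}y^{m-1})_i\bigr| \leq (\mathcal{A}|y|^{m-1})_i,\qquad i\in[n].
\end{equation*}

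The core step is a scaling comparison against $x$. Since $x>0$, the scalar $t_0=\max_i(|y_i|/x_i)$ is positive and finite, is the smallest $t$ with $|y|\leq tx$ componentwise, and is attained at some index $i_0$, i.e., $|y_{i_0}|=t_0 x_{i_0}$. Because $\mathcal{A}\geq 0$, the map $u\mapsto\mathcal{A}u^{m-1}$ is monotone on $\mathbb{R}_+^n$, so $\mathcal{A}|y|^{m-1}\leq\mathcal{A}(t_0x)^{m-1}=t_0^{m-1}\lambda x^{[m-1]}$. Reading off the $i_0$-th coordinate,
\begin{equation*}
|\mu|\,t_0^{m-1}x_{i_0}^{m-1} = |\mu|\,|y_{i_0}|^{m-1} \leq (\mathcal{A}|y|^{m-1})_{i_0} \leq t_0^{m-1}\lambda x_{i_0}^{m-1},
\end{equation*}
and cancelling the positive factor $t_0^{m-1}x_{i_0}^{m-1}$ gives $|\mu|\leq\lambda$.

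I expect the most delicate point to be verifying the monotonicity of $u\mapsto\mathcal{A}u^{m-1}$ on the nonnegative orthant and the correct scaling $\mathcal{A}(t_0x)^{m-1}=t_0^{m-1}\mathcal{A}x^{m-1}$; both are immediate from $\mathcal{A}\geq 0$ and the product structure of each summand $a_{ii_2\cdots i_m}u_{i_2}\cdots u_{i_m}$, but without them the key inequality $\mathcal{A}|y|^{m-1}\leq t_0^{m-1}\mathcal{A}x^{m-1}$ is unavailable. Once monotonicity is in hand, the chain $\rho(\mathcal{A})=\sup_\mu|\mu|\leq\lambda\leq\rho(\mathcal{A})$ closes the argument.
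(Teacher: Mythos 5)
Your proof is correct. Note that the paper does not prove this lemma at all---it is quoted from Yang and Yang \cite{yy} as a known ingredient of the Perron--Frobenius theory for nonnegative tensors---so there is no in-paper argument to compare against. Your Collatz--Wielandt-style comparison (take the least $t_0$ with $|y|\leq t_0x$, use monotonicity and $(m-1)$-homogeneity of $u\mapsto\mathcal{A}u^{m-1}$ on $\mathbb{R}_+^n$, and read off the coordinate where the bound is tight) is exactly the standard route taken in the cited reference; all the steps you flag as delicate (nonnegativity of $\lambda$, the triangle inequality $|\mathcal{A}y^{m-1}|\leq\mathcal{A}|y|^{m-1}$, monotonicity and scaling) check out, and the final squeeze $\rho(\mathcal{A})\leq\lambda\leq\rho(\mathcal{A})$ is legitimate since $\lambda$ is itself an eigenvalue and the eigenvalue set of a tensor is finite, so the supremum defining $\rho(\mathcal{A})$ is attained.
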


\begin{lem}\cite{Friedland2013Perron,yy2} \label{perron}
Let  $\mathcal{A} \in \mathbb{R}_{+}^{[m,n]}$ be a  weakly irreducible tensor. Then $\rho(\mathcal{A})$ is an eigenvalue of $\mathcal{A}$ and there exists a unique positive eigenvector corresponding to $\rho(\mathcal{A})$ up to a multiplicative constant.
\end{lem}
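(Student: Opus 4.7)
The plan is to establish the three assertions of the lemma---existence of a nonnegative eigenvector for some eigenvalue $\lambda\geq 0$, upgrading of that eigenvector to a strictly positive one, and its uniqueness up to scaling---in that order. Together with Lemma~\ref{non}, positivity will immediately identify the eigenvalue as $\rho(\mathcal{A})$, so the Perron-eigenvalue part follows for free from the eigenvector construction, and only existence, positivity, and uniqueness need genuine work.

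For existence, I would work with the nonlinear operator $N_{\mathcal{A}}:\mathbb{R}_{+}^{n}\to\mathbb{R}_{+}^{n}$ defined by $N_{\mathcal{A}}(x)=(\mathcal{A}x^{m-1})^{[1/(m-1)]}$, together with its $\ell_{1}$-normalization $T_{\mathcal{A}}=N_{\mathcal{A}}/\|N_{\mathcal{A}}\|_{1}$ on the standard simplex $\Delta=\{x\in\mathbb{R}_{+}^{n}:\sum_{i}x_{i}=1\}$, whose fixed points correspond precisely to nonnegative eigenvectors of $\mathcal{A}$. Since $T_{\mathcal{A}}$ can fail to be continuous where $N_{\mathcal{A}}(x)=0$, I would first perturb to $\mathcal{A}_{\varepsilon}=\mathcal{A}+\varepsilon\mathcal{J}$ with $\mathcal{J}$ the all-ones tensor; then $T_{\mathcal{A}_{\varepsilon}}$ maps $\Delta$ continuously into its relative interior, and Brouwer's fixed point theorem yields $x_{\varepsilon}\in\Delta$ with $\mathcal{A}_{\varepsilon}x_{\varepsilon}^{m-1}=\lambda_{\varepsilon}\,x_{\varepsilon}^{[m-1]}$. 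A convergent subsequence as $\varepsilon\to 0^{+}$ then delivers a nonnegative eigenvector $v\in\Delta$ of $\mathcal{A}$ with some $\lambda\geq 0$.

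Positivity of $v$ is where weak irreducibility enters. Setting $S=\{i:v_{i}>0\}$, the equation $(\mathcal{A}v^{m-1})_{i}=\lambda v_{i}^{m-1}=0$ for $i\notin S$ forces $a_{i i_{2}\cdots i_{m}}=0$ whenever at least one of $i_{2},\ldots,i_{m}$ belongs to $S$; equivalently, $G(\mathcal{A})$ has no arc from $[n]\setminus S$ into $S$. Strong connectivity of $G(\mathcal{A})$ then forces $S=[n]$, so $v>0$, and Lemma~\ref{non} identifies $\lambda$ with $\rho(\mathcal{A})$.

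The main obstacle is uniqueness. Given two positive eigenvectors $u,v$ for $\rho(\mathcal{A})$, set $t^{*}=\min_{i}u_{i}/v_{i}$ and $w=u-t^{*}v$; then $w\geq 0$ with $w_{i_{0}}=0$ for some $i_{0}$, and I would aim to show that $\{i:w_{i}=0\}$ is closed under out-arcs in $G(\mathcal{A})$, so that strong connectivity forces $w\equiv 0$ and hence $u=t^{*}v$. Concretely, positive homogeneity gives $(\mathcal{A}u^{m-1})_{i_{0}}=(\mathcal{A}(t^{*}v)^{m-1})_{i_{0}}$, while $u\geq t^{*}v$ and nonnegativity of $\mathcal{A}$ force each summand on the left to equal the corresponding summand on the right, which can be reinterpreted as the required out-closure property. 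The difficulty here is that $x\mapsto\mathcal{A}x^{m-1}$ is only positively homogeneous of degree $m-1$ rather than linear, so the classical matrix uniqueness argument does not transplant directly; one must work coordinatewise with the nonlinear comparison above, or, equivalently, invoke strict contractivity of $T_{\mathcal{A}}$ in the Hilbert projective metric on the interior of $\Delta$. This nonlinear comparison is the crux of the proof.
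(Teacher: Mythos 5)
The paper offers no proof of this lemma at all: it is imported as a known result from \cite{Friedland2013Perron,yy2}, so there is nothing in-paper to compare your argument against and it must be judged on its own terms. Your uniqueness step is correct and cleanly executed: with $v>0$ the comparison $u\ge t^{*}v$ together with equality of the sums forces equality of each summand, and since every factor $u_{i_j}\ge t^{*}v_{i_j}>0$ is strictly positive, equality of the products forces equality of the factors, so the zero set of $w$ is indeed closed under out-arcs and strong connectivity finishes it. The existence step (perturb by $\varepsilon\mathcal{J}$, Brouwer on the simplex, pass to the limit) is also the standard route. The genuine gap is in the positivity step.

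From $(\mathcal{A}v^{m-1})_i=\sum a_{ii_2\cdots i_m}v_{i_2}\cdots v_{i_m}=0$ for $i\notin S$ you may conclude $a_{ii_2\cdots i_m}=0$ only for tuples with \emph{all} of $i_2,\ldots,i_m$ in $S$; a single zero factor already kills a summand, so nothing follows about tuples that merely meet $S$. Hence you do not get ``no arc from $[n]\setminus S$ into $S$,'' and the appeal to strong connectivity collapses. This is not a wording slip: the deduction you are making is the one valid for irreducibility in the Chang--Pearson--Zhang sense, which is strictly stronger than weak irreducibility, and the conclusion itself is false at this level of generality. Take $m=3$, $n=2$, $a_{112}=a_{121}=a_{211}=1$ and all other entries $0$; then $(\mathcal{A}x^{2})_1=2x_1x_2$ and $(\mathcal{A}x^{2})_2=x_1^{2}$, the digraph $G(\mathcal{A})$ has arcs $1\to 2$ and $2\to 1$ and is strongly connected, yet $v=(0,1)^{\mathrm{T}}$ satisfies $\mathcal{A}v^{2}=0=0\cdot v^{[2]}$. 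So a weakly irreducible nonnegative tensor can have a nonnegative eigenvector with zero entries, your limit vector could a priori be such a vector, and Lemma~\ref{non} then gives you nothing. To repair the argument you must first pin the limiting eigenvalue to $\rho(\mathcal{A})$ (e.g.\ by monotonicity and continuity of $\rho(\mathcal{A}_\varepsilon)$, or a Collatz--Wielandt bound) and then invoke a genuinely harder positivity result for eigenvectors associated with $\rho(\mathcal{A})$ of weakly irreducible tensors---essentially Lemma~\ref{non-irr}, whose proof in \cite{yy2} is not the naive zero-set argument---or else follow \cite{Friedland2013Perron} and produce a positive eigenvector directly via the logarithmic change of variables and nonlinear Perron--Frobenius theory, bypassing the nonnegative-then-upgrade route entirely.
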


\begin{lem}\cite{yy2}\label{non-irr}
Let $\mathcal{A}\in \mathbb{R}_+^{[m,n]}$  be a  weakly irreducible tensor. Suppose $x$ is an eigenvector corresponding to $\rho(\mathcal{A})$. Then $x$ contains no zero elements.
\end{lem}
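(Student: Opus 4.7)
The plan is to reduce to a nonnegative eigenvector-like inequality by taking $y=(|x_1|,\ldots,|x_n|)^{\mathrm{T}}$, to compare $y$ with a scalar multiple of the positive Perron vector guaranteed by Lemma~\ref{perron}, and then to use weak irreducibility to force $y>0$. Applying the triangle inequality entry-by-entry to $\mathcal{A}x^{m-1}=\rho(\mathcal{A})x^{[m-1]}$ gives
\[
(\mathcal{A} y^{m-1})_i=\sum a_{i i_2\cdots i_m}|x_{i_2}|\cdots|x_{i_m}|\geq |(\mathcal{A} x^{m-1})_i|=\rho(\mathcal{A})y_i^{m-1},
\]
so $\mathcal{A} y^{m-1}\geq\rho(\mathcal{A})y^{[m-1]}$ componentwise with $y\geq 0$, $y\neq 0$. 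It therefore suffices to prove that any such $y$ has no zero entries.

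Let $u>0$ be the positive Perron vector from Lemma~\ref{perron}, set $\lambda:=\max_i y_i/u_i>0$, so $\lambda u\geq y$ componentwise with equality at some index $i^*$. Put $v:=\lambda u-y\geq 0$ and $S_v:=\{i:v_i=0\}$; then $i^*\in S_v\neq\emptyset$, and the goal is to show $S_v=[n]$. The key algebraic step is the multilinear telescoping identity
\[
\prod_{\ell=2}^m a_\ell-\prod_{\ell=2}^m b_\ell=\sum_{k=2}^m\Bigl(\prod_{j=2}^{k-1}a_j\Bigr)(a_k-b_k)\Bigl(\prod_{j=k+1}^m b_j\Bigr),
\]
applied with $a_\ell=(\lambda u)_{i_\ell}$ and $b_\ell=y_{i_\ell}$. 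This expresses $(\mathcal{A}(\lambda u)^{m-1})_i-(\mathcal{A} y^{m-1})_i$ as a sum of \emph{nonnegative} terms of the form $a_{i i_2\cdots i_m}(\lambda u)_{i_2}\cdots(\lambda u)_{i_{k-1}}v_{i_k}y_{i_{k+1}}\cdots y_{i_m}$. On the other hand, since $\mathcal{A}(\lambda u)^{m-1}=\lambda^{m-1}\rho(\mathcal{A})u^{[m-1]}$ and $\mathcal{A} y^{m-1}\geq\rho(\mathcal{A})y^{[m-1]}$,
\[
(\mathcal{A}(\lambda u)^{m-1})_i-(\mathcal{A} y^{m-1})_i\leq\rho(\mathcal{A})\bigl((\lambda u)^{[m-1]}-y^{[m-1]}\bigr)_i,
\]
which vanishes at $i=i^*$ because $\lambda u_{i^*}=y_{i^*}$.

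Consequently every summand at the $i^*$-coordinate must be zero. Since all $u$-factors are strictly positive, the $k=m$ summand forces $v_{i_m}=0$ for each hyperedge with $a_{i^* i_2\cdots i_m}>0$; this makes $y_{i_m}=\lambda u_{i_m}>0$, so the $k=m-1$ summand forces $v_{i_{m-1}}=0$; iterating backward through $k=m,m-1,\ldots,2$ yields $i_\ell\in S_v$ for every $\ell\in\{2,\ldots,m\}$. Thus every out-neighbor of $i^*$ in $G(\mathcal{A})$ lies in $S_v$. The identical argument at any $i\in S_v$ (where $\lambda u_i=y_i$ again makes the upper bound vanish) shows $S_v$ is closed under out-neighbors in $G(\mathcal{A})$. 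Weak irreducibility makes $G(\mathcal{A})$ strongly connected, so the only nonempty out-closed vertex subset is $[n]$; hence $S_v=[n]$, $v=0$, and $y=\lambda u>0$, whence $|x_i|>0$ for every $i$.

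The step I expect to be the main obstacle is recognizing that the nonlinear tensor difference $\mathcal{A}(\lambda u)^{m-1}-\mathcal{A} y^{m-1}$ admits the multilinear telescoping rewriting above as a manifestly nonnegative sum; this rewriting is what replaces the linear identity $A(\lambda u-y)=A(\lambda u)-Ay$ that makes the matrix case immediate, and is what lets one strip off one coordinate at a time to complete the graph-closure argument.
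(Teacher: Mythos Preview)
The paper does not supply its own proof of this lemma; it is quoted from \cite{yy2} as a known Perron--Frobenius fact for tensors. So there is nothing in the present paper to compare against.

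Your argument is correct and self-contained. Passing to $y=(|x_1|,\ldots,|x_n|)^{\mathrm T}$ and using the triangle inequality to obtain the supersolution $\mathcal{A}y^{m-1}\ge\rho(\mathcal{A})y^{[m-1]}$ is the right reduction, and the multilinear telescoping identity is exactly the tool that replaces linearity in the matrix case. The backward sweep through $k=m,m-1,\ldots,2$ is handled carefully: for a fixed tuple $(i_2,\ldots,i_m)$ with $a_{i^*i_2\cdots i_m}>0$, you first force $v_{i_m}=0$ (using only that the $u$-factors are positive), which makes $y_{i_m}>0$ and allows you to force $v_{i_{m-1}}=0$, and so on. The final graph-closure step is valid because in a strongly connected digraph the only nonempty vertex set with no outgoing edges is the whole vertex set. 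This is essentially the standard approach taken in the Yang--Yang papers; you have reproduced it cleanly.
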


\vspace{3mm}

Let a hypergraph ${\mathcal{G}}=(V({\mathcal{G}}),E({\mathcal{G}}))$, where ${V({\mathcal{G}})} = \{1,2, \ldots ,n\}$ and $E({\mathcal{G}}) = \{e_1,e_2, \ldots ,e_m\}$ are the vertex set and edge set of   ${\mathcal{G}}$, respectively. If each edge of ${\mathcal{G}}$ contains $k$  vertices, then ${\mathcal{G}}$ is called a \textit{$k$-uniform hypergraph}. Clearly, $2$-uniform hypergraphs are exactly the ordinary graphs.
The degree of a vertex $i$ of ${\mathcal{G}}$ is denoted by $d_i$, where $d_i=|\{e_j:i\in e_j,j=1,\ldots,m\}|$, $i\in[n]$. If all vertices of ${\mathcal{G}}$ have the same degree, then ${\mathcal{G}}$ is called \textit{regular}.
The \textit{adjacency tensor} \cite{cooper} of $k$-uniform hypergraph  ${\mathcal{G}}$, denoted by $\mathcal{A}_{\mathcal{G}}$, is a $k$-order $n$-dimensional nonnegative symmetric tensor with entries
\[
a_{i_1 i_2  \cdots i_k }  =\left\{
          \begin{array}{ll}
            \frac{1}{(k-1)!}, &{\text{if} ~\{i_1 ,i_2 , \ldots ,i_k \}\in E({\mathcal{G}});} \\
            0,  &{\text{otherwise}.}
          \end{array}
        \right.
\]
Let $\mathcal{L}_{\mathcal{G}}=\mathcal{D}_{\mathcal{G}}-\mathcal{A}_{\mathcal{G}}$ be the \textit{Laplacian tensor}  of  $\mathcal{G} $ \cite{Qilaplacian}, where $\mathcal{D}_{\mathcal{G}}$ is a diagonal tensor, whose diagonal entries are $d_1,\ldots,d_n$, respectively.



A \textit{path} $P$ in a $k$-uniform hypergraph ${\mathcal{G}}$  is defined to be an alternating sequence of vertices and edges $v_{0}e_{1}v_{1}e_{2} \cdots v_{l-1}e_{l}v_{l}$, where $v_0,\ldots,v_{l}$ (resp. $e_1,\ldots,e_l$) are distinct vertices (resp. edges) of ${\mathcal{G}}$ and $v_{i - 1} ,v_i  \in e_i $ for $i = 1, \ldots ,l$.
If there exists a path starting at $u$ and terminating at $v$ for all $u,v \in V({\mathcal{G}})$, then ${\mathcal{G}}$ is called \textit{connected}.
Let $X$ be a subset of $V({\mathcal{G}})$, ${\mathcal{G}}(X)$  denote the sub-hypergraph  of ${\mathcal{G}}$ induced by $X$.
If ${\mathcal{G}}(X)$ is connected and there isn't the paths  starting at the  vertices in $X$ and terminating at vertices in $V\setminus X$, then ${\mathcal{G}}(X)$ is called a  \textit{connected component} of ${\mathcal{G}}$.

\begin{lem} \cite{Pearson2014On}\label{connected}
Let ${\mathcal{G}}$ be a $k$-uniform hypergraph. Then ${\mathcal{G}}$ is connected if and only if adjacency tensor $\mathcal{A}_{\mathcal{G}}$ is weakly irreducible.
\end{lem}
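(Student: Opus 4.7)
The plan is to unwind the definitions and show that weak irreducibility of $\mathcal{A}_{\mathcal{G}}$, i.e.\ strong connectivity of $G(\mathcal{A}_{\mathcal{G}})$, coincides with connectedness of $\mathcal{G}$ in the sense of hypergraph paths.

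First, I would identify the arcs of $G(\mathcal{A}_{\mathcal{G}})$ explicitly. By the formula defining the adjacency tensor, $a_{i\,i_2\cdots i_k}>0$ exactly when $\{i,i_2,\ldots,i_k\}$ is an edge of $\mathcal{G}$; consequently $(i,j)\in E(\mathcal{A}_{\mathcal{G}})$ if and only if there is a hyperedge $e\in E(\mathcal{G})$ with $\{i,j\}\subseteq e$. Symmetry of $\mathcal{A}_{\mathcal{G}}$ then forces the arc set of $G(\mathcal{A}_{\mathcal{G}})$ to be closed under reversal, so strong connectivity of $G(\mathcal{A}_{\mathcal{G}})$ is nothing but ordinary connectivity of the ``$2$-section'' of $\mathcal{G}$.

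For the forward direction, I take $u,v\in V(\mathcal{G})$ and a hypergraph path $u=v_0 e_1 v_1\cdots e_\ell v_\ell=v$. Because $v_{s-1},v_s\in e_s$ for each $s$, each pair $(v_{s-1},v_s)$ is an arc of $G(\mathcal{A}_{\mathcal{G}})$, so concatenation yields a directed walk from $u$ to $v$; since $u,v$ were arbitrary, $G(\mathcal{A}_{\mathcal{G}})$ is strongly connected.

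For the reverse direction, given $u,v\in V(\mathcal{G})$ I start with a directed walk $u=u_0\to u_1\to\cdots\to u_\ell=v$ in $G(\mathcal{A}_{\mathcal{G}})$, and for each arc $(u_{s-1},u_s)$ choose a witnessing hyperedge $e_s\in E(\mathcal{G})$ containing both endpoints; this produces an alternating sequence $u_0 e_1 u_1\cdots e_\ell u_\ell$, which is a hypergraph walk. The only real obstacle is that the paper's definition of a path requires both vertices \emph{and} edges to be distinct, so I would pass to a walk of minimum length from $u$ to $v$: a repeated vertex $u_i=u_j$ ($i<j$) would let me shorten the walk to $u_0\cdots u_i e_{j+1} u_{j+1}\cdots u_\ell$, and a repeated edge $e_i=e_j$ ($i<j$) would let me shorten it to $u_0\cdots u_{i-1} e_i u_j\cdots u_\ell$ using $u_{i-1},u_j\in e_i$; each such shortening contradicts minimality, so a minimum-length walk is a genuine hypergraph path, proving $\mathcal{G}$ is connected.
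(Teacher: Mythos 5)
Your argument is correct, but note that the paper does not actually prove this lemma: it is quoted verbatim from \cite{Pearson2014On}, so there is no in-paper proof to compare against. Your write-up is a clean, self-contained verification. The two directions are handled properly: you correctly identify the arc set of $G(\mathcal{A}_{\mathcal{G}})$ as the (symmetric) relation ``$i\neq j$ lie in a common hyperedge,'' which reduces weak irreducibility to connectivity of the $2$-section; the forward direction is then immediate because consecutive vertices of a hypergraph path are distinct and share an edge. The only genuinely delicate point is the one you isolate in the reverse direction --- the paper's definition of a path demands distinct vertices \emph{and} distinct edges --- and your minimum-length-walk argument disposes of both kinds of repetition correctly: deleting the segment between a repeated vertex pair $u_i=u_j$ (using $u_i\in e_{j+1}$) or splicing at a repeated edge $e_i=e_j$ (using $u_{i-1},u_j\in e_i$) strictly shortens the walk, so a shortest walk is already a path. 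This is essentially the standard proof one would expect to find in \cite{Pearson2014On}, and nothing is missing.
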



In the following, we give an example to show that the Theorem \ref{Fiedler1} and Theorem \ref{Fiedler2} can't be generalized to hypergraphs directly.
\begin{example}
The Figure 1 is a $4$-uniform hypergraph $\mathcal{G}$,  by Theorem 4.3 in \cite{cooper}, we get that  the eigenvalues of $\mathcal{A}_{\mathcal{G}}$ are $0,-1,1,-i,i$ and the corresponding algebraic multiplicity are 36,16,16,16,16, respectively, where $i^2=-1$.

Since $\mathcal{L}_{\mathcal{G}}= \mathcal{D}- \mathcal{A}=\mathcal{I}- \mathcal{A}$, by the definition of eigenvalue of tensors, we have the eigenvalues of $\mathcal{L}_{\mathcal{G}}$ are $1,2,0,1+i,1-i$ and the corresponding algebraic multiplicity are 36,16,16,16,16, respectively.

Obviously, the number of connected component of ${\mathcal{G}}$ is 1. But the  algebraic multiplicity of Laplacian eigenvalue 0 is 16.
Thus,  the Theorem \ref{Fiedler1} and Theorem \ref{Fiedler2} can't be generalized to hypergraphs directly.
%

\vspace{3mm}

\begin{center}
\begin{tikzpicture}
\draw (11.5,0) ellipse (2.5cm and 0.8cm);
\draw (10,0) ellipse (0.2cm and 0.2cm);
\fill[black] (10,0) ellipse (0.2cm and 0.2cm);
\draw (11,0) ellipse (0.2cm and 0.2cm);
\fill[black] (11,0) ellipse (0.2cm and 0.2cm);
\draw (12,0) ellipse (0.2cm and 0.2cm);
\fill[black] (12,0) ellipse (0.2cm and 0.2cm);
\draw (13,0) ellipse (0.2cm and 0.2cm);
\fill[black] (13,0) ellipse (0.2cm and 0.2cm);
\end{tikzpicture}

\vspace{3mm}
 Figure 1: 4-uniform hypergraph $\mathcal{G}$
\end{center}
\end{example}

\vspace{3mm}


\section{Main results}

In this section, we show that the   number of connected components of a $k$-uniform hypergraph $\mathcal{G}$ equals  the (Z-)geometry  connectivity $\beta( \mathcal{G})$ ($\beta_Z(\mathcal{G})$).
Before we show the main results, we first give the following result.
\begin{lem}\label{th1}
Let ${\mathcal{G}}$ be a connected $k$-uniform hypergraph.  Then $\beta(\mathcal{G})=1$.
\end{lem}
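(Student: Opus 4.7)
The plan is to establish $\beta(\mathcal{G})=1$ by proving the two inequalities separately, using the Perron--Frobenius machinery collected in Section~2.

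For the lower bound $\beta(\mathcal{G})\geq 1$, I would simply exhibit the all-ones vector $\mathbf{1}$ as a nonnegative eigenvector of $\mathcal{L}_{\mathcal{G}}$ for eigenvalue $0$. Indeed, from the definition of $\mathcal{A}_{\mathcal{G}}$ one computes $(\mathcal{A}_{\mathcal{G}}\mathbf{1}^{k-1})_i = d_i$, which matches $(\mathcal{D}_{\mathcal{G}}\mathbf{1}^{[k-1]})_i = d_i$; hence $\mathcal{L}_{\mathcal{G}}\mathbf{1}^{k-1}=0$.

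For the upper bound, the idea is to replace $\mathcal{L}_{\mathcal{G}}$ by a genuinely nonnegative weakly irreducible tensor so that Lemmas~\ref{non}, \ref{perron}, \ref{non-irr} apply. Since $\mathcal{G}$ is connected and (assuming $n\geq 2$) every vertex lies in some edge, we have $d_i>0$ for all $i$. Define a tensor $\mathcal{B}\in\mathbb{R}_+^{[k,n]}$ by $b_{i i_2\cdots i_k} = \frac{1}{d_i}\,a_{i i_2\cdots i_k}$, so that $(\mathcal{B} y^{k-1})_i = \frac{1}{d_i}(\mathcal{A}_{\mathcal{G}} y^{k-1})_i$ for every $y$. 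Then the equation $\mathcal{L}_{\mathcal{G}} x^{k-1}=0$ is equivalent to $\mathcal{B} x^{k-1}=x^{[k-1]}$, i.e.\ $x$ is an eigenvector of $\mathcal{B}$ for eigenvalue $1$. Note also that $\mathcal{B}\mathbf{1}^{k-1}=\mathbf{1}^{[k-1]}$, so $1$ is an eigenvalue of $\mathcal{B}$ with a positive eigenvector, and Lemma~\ref{non} gives $\rho(\mathcal{B})=1$.

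Next, $\mathcal{B}$ has the same zero/nonzero pattern as $\mathcal{A}_{\mathcal{G}}$, so $G(\mathcal{B})=G(\mathcal{A}_{\mathcal{G}})$. Connectedness of $\mathcal{G}$ combined with Lemma~\ref{connected} yields that $\mathcal{A}_{\mathcal{G}}$, and hence $\mathcal{B}$, is weakly irreducible. Now take any nonzero nonnegative eigenvector $x$ of $\mathcal{L}_{\mathcal{G}}$ for eigenvalue $0$; by the reduction above, $x$ is an eigenvector of the weakly irreducible nonnegative tensor $\mathcal{B}$ for its spectral radius $\rho(\mathcal{B})=1$. Lemma~\ref{non-irr} then forces $x$ to have no zero entries, and Lemma~\ref{perron} gives uniqueness of the positive eigenvector up to scaling, so $x$ is a positive multiple of $\mathbf{1}$. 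Consequently the space of nonnegative eigenvectors at $0$ is one-dimensional, and $\beta(\mathcal{G})=1$.

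The main obstacle, and essentially the only nontrivial idea, is the passage from $\mathcal{L}_{\mathcal{G}}$ (which is not nonnegative and to which Perron--Frobenius does not directly apply) to the nonnegative weakly irreducible tensor $\mathcal{B}$ obtained by row-scaling with $1/d_i$; once this rescaling is in place, the three Perron--Frobenius lemmas plug in essentially formally. A minor auxiliary check is that $d_i>0$ for every $i$, which is automatic when $\mathcal{G}$ is connected on more than one vertex (and the case $n=1$ is trivial).
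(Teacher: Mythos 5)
Your proof is correct and follows essentially the same strategy as the paper's: both convert the equation $\mathcal{L}_{\mathcal{G}}x^{k-1}=0$ into a spectral-radius eigenproblem for a nonnegative weakly irreducible tensor and then invoke Lemmas \ref{non}, \ref{perron} and \ref{non-irr} together with Lemma \ref{connected}. The only difference is the choice of that auxiliary tensor: the paper shifts, using $\tilde{\mathcal{L}}_{\mathcal{G}}=\nabla\mathcal{I}-\mathcal{L}_{\mathcal{G}}$ with spectral radius $\nabla$, whereas you scale by the degrees, using $\mathcal{B}$ with entries $a_{ii_2\cdots i_k}/d_i$ and spectral radius $1$; both reductions work equally well.
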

\begin{proof}
Let $\tilde{\mathcal{L}_{\mathcal{G}}}=\nabla \mathcal{I}-\mathcal{L}_{\mathcal{G}}$, where $\nabla$ is the maximum degree of $\mathcal{G}$.
It's easy to check that $(0,\textbf{e})$ is an eigenpair of $\mathcal{L}_{\mathcal{G}}$, where $\textbf{e}=(1,1,\ldots,1)^{\mathrm{T}}\in \mathbb{R}^n$. Then $(\nabla,\textbf{e})$ is an eigenpair of $\mathcal{\tilde{L}}_{\mathcal{G}}$.

Since ${\mathcal{G}}$ is connected, from Lemma \ref{connected}, $\mathcal{A}_{\mathcal{G}}$ is weakly irreducible.
So we have
 $\tilde{\mathcal{L}_{\mathcal{G}}}=\nabla \mathcal{I}-\mathcal{L}_{\mathcal{G}}=\nabla \mathcal{I}-\mathcal{D}_{\mathcal{G}}+\mathcal{A}_{\mathcal{G}}$ is a nonnegative weakly irreducible tensor.
Then, by Lemma \ref{non}, we get that $\nabla$ is the spectral radius of $\tilde{\mathcal{L}}_{\mathcal{G}}$.

From Lemma \ref{perron} and Lemma \ref{non-irr}, we get that the nonnegative eigenvector of a nonnegative  weakly irreducible tensor corresponding to  spectral radius is unique  up to a multiplicative constant.
So, $\textbf{e}$ is  a unique nonnegative eigenvector corresponding to  spectral radius $\nabla$ of $\tilde{\mathcal{L}}_{\mathcal{G}}$ up to a multiplicative constant.

Let $(0,x)$ is an eigenpair of $\mathcal{L}_{\mathcal{G}}$.
Obviously,
$$\tilde{\mathcal{L}_{\mathcal{G}}}x^{k-1}=(\nabla \mathcal{I}-\mathcal{L}_{\mathcal{G}})x^{k-1}=\nabla x^{[k-1]}-\mathcal{L}_{\mathcal{G}}x^{k-1}=\nabla x^{[k-1]}.$$
So  $(\nabla,x)$ is an eigenpair of $\mathcal{\tilde{L}}_{\mathcal{G}}$.
Hence,  $\textbf{e}$ is  a unique nonnegative eigenvector corresponding to  eigenvalue $0$ of $\mathcal{L}_{\mathcal{G}}$ up to a multiplicative constant, i.e., $\beta(\mathcal{G})=1$.
\end{proof}

Let $ \mathcal{A}=(a_{i_1\cdots i_k})\in \mathbb{R}^{[k,n]}$, $S$ be a subset of $[n]$, and $\mathcal{A}[S]=(a_{i_1\cdots i_k})$ denote a $k$-order $|S|$-dimensional \textit{subtensor} of $\mathcal{A}$, where $i_1,i_2,\ldots, i_k \in S$.

\begin{thm}\label{th2}
Let ${\mathcal{G}}$ be a $k$-uniform hypergraph. Then the number of connected components of $\mathcal{G}$ is the geometry connectivity $\beta(\mathcal{G})$.
\end{thm}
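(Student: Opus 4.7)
Let $\mathcal{G}_1,\ldots,\mathcal{G}_c$ be the connected components of $\mathcal{G}$ with vertex sets $V_1,\ldots,V_c$, and write $\chi^{(j)}\in\mathbb{R}^n$ for the $\{0,1\}$-indicator vector of $V_j$. The plan is to show $\beta(\mathcal{G})=c$ by a two-sided argument: produce $c$ linearly independent nonnegative $0$-eigenvectors, then show every nonnegative $0$-eigenvector lies in the span of those.

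For the lower bound, I will verify directly that each $\chi^{(j)}$ satisfies $\mathcal{L}_{\mathcal{G}}(\chi^{(j)})^{k-1}=0$. The key observation is that for $i\in V_j$ every edge $e\ni i$ is contained in $V_j$, so in the expansion $(\mathcal{L}_{\mathcal{G}}\chi^{(j)})^{k-1})_i=d_i(\chi^{(j)}_i)^{k-1}-\sum_{e\ni i,\,e=\{i,j_1,\ldots,j_{k-1}\}}\chi^{(j)}_{j_1}\cdots\chi^{(j)}_{j_{k-1}}$ each product equals $1$, giving $d_i-d_i=0$; for $i\notin V_j$ the diagonal term vanishes, and each edge through $i$ must contain a vertex outside $V_j$ (otherwise $i\in V_j$), so every product again vanishes. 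Since $\chi^{(1)},\ldots,\chi^{(c)}$ have disjoint supports they are linearly independent, giving $\beta(\mathcal{G})\geq c$.

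For the upper bound, let $x\in\mathbb{R}_+^n$ be any nonnegative $0$-eigenvector of $\mathcal{L}_{\mathcal{G}}$. I will show that for each $j$ the restriction $x|_{V_j}$ is either zero or a positive multiple of the all-ones vector on $V_j$. Because no edge joins $V_j$ to $V\setminus V_j$, the equation $(\mathcal{L}_{\mathcal{G}}x^{k-1})_i=0$ for $i\in V_j$ involves only coordinates of $x$ indexed by $V_j$, and is precisely $(\mathcal{L}_{\mathcal{G}_j}(x|_{V_j})^{k-1})_i=0$. Hence $x|_{V_j}$ is a nonnegative $0$-eigenvector of $\mathcal{L}_{\mathcal{G}_j}$ (if nonzero), and Lemma \ref{th1} applied to the connected hypergraph $\mathcal{G}_j$ forces $x|_{V_j}=\alpha_j\,\mathbf{e}|_{V_j}$ for some $\alpha_j\geq 0$. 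Consequently $x=\sum_{j=1}^c\alpha_j\chi^{(j)}$, so the set of nonnegative $0$-eigenvectors lies inside the $c$-dimensional subspace $\mathrm{span}\{\chi^{(1)},\ldots,\chi^{(c)}\}$, yielding $\beta(\mathcal{G})\leq c$.

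Combining the two bounds gives $\beta(\mathcal{G})=c$. The main technical point to handle carefully is the reduction step: one must ensure that the tensor-vector product $\mathcal{L}_{\mathcal{G}}x^{k-1}$ restricted to $V_j$ really coincides with $\mathcal{L}_{\mathcal{G}_j}(x|_{V_j})^{k-1}$, which relies on the fact that every edge of $\mathcal{G}$ lies entirely within a single connected component, and that the diagonal entry $d_i$ as computed in $\mathcal{G}$ agrees with the degree of $i$ in $\mathcal{G}_j$. Everything else is a direct application of Lemma \ref{th1} componentwise.
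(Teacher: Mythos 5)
Your proof is correct and follows essentially the same route as the paper's: exhibit the component indicator vectors as linearly independent nonnegative $0$-eigenvectors, then use the fact that edges do not cross components to restrict an arbitrary nonnegative $0$-eigenvector to each $V_j$ and invoke Lemma \ref{th1} to conclude it is a multiple of the all-ones vector there. Your explicit verification of the eigenvector equation for the indicator vectors is slightly more detailed than the paper's, but the argument is the same.
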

\begin{proof}
Denote $\mathcal{L}_{\mathcal{G}}$ the Laplacian  tensors of  ${\mathcal{G}}$ and $\textbf{e}=(1,1,\ldots,1)^{\mathrm{T}}\in \mathbb{R}^n$.
Let ${\mathcal{G}}(V_1),\ldots,{\mathcal{G}}(V_r)$ be the connected components of ${\mathcal{G}}$. Then subtensor $\mathcal{L}_{\mathcal{G}}[V_i]$ of $\mathcal{L}_{\mathcal{G}}$ is the Laplacian  tensors of sub-hypergraphs ${\mathcal{G}}(V_i)$, $i=1,\ldots,r$.
Then we have
\begin{align}\label{eq3.1}
\mathcal{L}_{\mathcal{G}}[V_i](\textbf{e}[V_i])^{k-1}=0, ~i=1,\ldots,r.\tag{3.1}
\end{align}
Thus, $\textbf{e}[V_i]$ is  a  nonnegative eigenvector corresponding to  eigenvalue $0$ of $\mathcal{L}_{\mathcal{G}}[V_i]$, $i=1,\ldots,r$.

Let  $\textbf{e}_{V_i}=(x^{(i)}_j)\in \mathbb{R}^n$, its entry $x^{(i)}_j=1$ if $j\in V_i$ and zero otherwise, $i=1,\ldots,r$.
Then by (\ref{eq3.1}), we have
\[
\mathcal{L}_{\mathcal{G}}(\textbf{e}_{V_i})^{k-1}=0,~i=1,\ldots,r,
\]
i.e., $(0,\textbf{e}_{V_1}), \ldots, (0,\textbf{e}_{V_r})$ are    eigenpairs of $\mathcal{L}_{\mathcal{G}}$.
And since $V_i\bigcap V_j=\varnothing$, $i,j=1,2,\ldots,r$, $i\neq j$.
We get that  $\textbf{e}_{V_1}, \ldots, \textbf{e}_{V_r}$ is linearly independent eigenvectors corresponding to $0$ of $\mathcal{L}_{\mathcal{G}}$. Next, we prove $\textbf{e}_{V_1}, \ldots, \textbf{e}_{V_r}$ is a maximal linearly independent group of nonnegative eigenvectors    corresponding to  eigenvalue $0$ of $\mathcal{L}_{\mathcal{G}}$, i.e., $\beta(\mathcal{G})=r$.

\vspace{3mm}

For any  nonnegative eigenvector $x=(x_1,\ldots,x_n)^{\mathrm{T}}$  corresponding to  eigenvalue $0$ of $\mathcal{L}_{\mathcal{G}}$, we have $\mathcal{L}_{\mathcal{G}}x^{k-1}=0 \text{ and } x\in \mathbb{R}_+^n$.
 Let  ${x}_{V_i}=(x^{(i)}_j)\in \mathbb{R}^n$, its entry $x^{(i)}_j=x_j$ if $j\in V_i$ and zero otherwise, $i=1,\ldots,r$. So $x=x_{V_1}+\cdots +x_{V_r}$.
 Since  $\mathcal{G}(V_1),\ldots,\mathcal{G}(V_r)$ are the connected components of ${\mathcal{G}}$, we get
 \[
  \mathcal{L}_{\mathcal{G}}(x_{V_i})^{k-1}=0,~ i=1,\ldots,r.
  \]
Then,
$$\mathcal{L}_{\mathcal{G}}[V_i](x[V_i])^{k-1}=0,~ i=1,\ldots,r.$$
So,   ${x}[V_i]$ is a nonnegative eigenvector corresponding to  eigenvalue $0$ of $\mathcal{L}_{\mathcal{G}}[V_i]$ if ${x}[V_i]\neq 0 $, $i\in[r]$.
From Lemma \ref{th1}, we know that  the number of maximal linearly independent nonnegative eigenvectors of ${\mathcal{G}}(V_i)$  corresponding to  eigenvalue $0$ of $\mathcal{L}_{\mathcal{G}}[V_i]$ is 1.
Thus,  ${x}[V_i] = c_i\textbf{e}[V_i] $, i.e., ${x}_{V_i} = c_i\textbf{e}_{V_i} $, $c_i$ is a constant, $i=1,\ldots,r$.
Therefore,  $x$ is a  linear combination of $\textbf{e}_{V_1}, \ldots, \textbf{e}_{V_r}$. So  $\textbf{e}_{V_1}, \ldots, \textbf{e}_{V_r}$ is a maximal linearly independent group of  nonnegative eigenvectors
  corresponding to  eigenvalue $0$ of $\mathcal{L}_{\mathcal{G}}$, i.e., $\beta(\mathcal{G})=r$.
\end{proof}

\vspace{5mm}

By Theorem \ref{th2}, we can get the following result directly.

\begin{thm}\label{th3}
A $k$-uniform hypergraph  ${\mathcal{G}}$ is connected if and only if $\beta(\mathcal{G})=1$.
\end{thm}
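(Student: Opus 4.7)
The plan is to derive this as an immediate corollary of Theorem \ref{th2}. By definition a hypergraph $\mathcal{G}$ is connected precisely when it has exactly one connected component, so after applying Theorem \ref{th2} the biconditional becomes the tautology ``one component iff $\beta(\mathcal{G})=1$.''

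More concretely, the proof would proceed in two short steps. First, assume $\mathcal{G}$ is connected. Then it has a single connected component, and Theorem \ref{th2} gives $\beta(\mathcal{G})=1$; alternatively, one could appeal directly to Lemma \ref{th1}, which already establishes this direction without needing the full strength of Theorem \ref{th2}. Second, assume $\beta(\mathcal{G})=1$. By Theorem \ref{th2} the number of connected components equals $\beta(\mathcal{G})=1$, so $\mathcal{G}$ has a unique connected component $\mathcal{G}(V_1)$ with $V_1 = V(\mathcal{G})$, which means $\mathcal{G}$ is connected.

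There is essentially no obstacle: the result is a one-line consequence of Theorem \ref{th2}, and its role in the paper is mainly to highlight the analogue of Fiedler's Theorem \ref{Fiedler1} for hypergraphs. The only point requiring a brief comment is that in the reverse direction one must observe that $\beta(\mathcal{G}) = 1$ forces $r = 1$ in the decomposition $V(\mathcal{G}) = V_1 \cup \cdots \cup V_r$ of Theorem \ref{th2}, since the characteristic vectors $\mathbf{e}_{V_1}, \ldots, \mathbf{e}_{V_r}$ constructed there are linearly independent nonnegative eigenvectors for eigenvalue $0$ of $\mathcal{L}_{\mathcal{G}}$; having $\beta(\mathcal{G}) = 1$ therefore precludes $r \geq 2$.
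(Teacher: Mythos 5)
Your proposal is correct and matches the paper exactly: the paper also derives Theorem \ref{th3} as an immediate consequence of Theorem \ref{th2}, since connectedness is equivalent to having exactly one connected component. Your extra remark on why $\beta(\mathcal{G})=1$ forces $r=1$ via the independence of $\mathbf{e}_{V_1},\ldots,\mathbf{e}_{V_r}$ is a harmless elaboration of the same one-line argument.
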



Obviously, the maximum numbers of linearly independent vectors  of  sets                                                                          $\{x: \mathcal{L}_{\mathcal{G}}x^{k-1}=0 \text{ and } x\in \mathbb{R}_+^n\}$ and   $\{x: \mathcal{L}_{\mathcal{G}}x^{k-1}=0, x^{\mathrm{T}}x=1 \text{ and } x\in \mathbb{R}_+^n\}$ are equal, i.e., $\beta(\mathcal{G})=\beta_Z(\mathcal{G})$.  By Theorem \ref{th2}, we have the following result.

\begin{thm}
Let ${\mathcal{G}}$ be a $k$-uniform hypergraph. Then the number of connected components of ${\mathcal{G}}$ is the Z-geometry connectivity $\beta_Z(\mathcal{G})$.
\end{thm}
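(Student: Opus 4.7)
The plan is to derive this statement immediately from Theorem~\ref{th2} by establishing the equality $\beta_Z(\mathcal{G}) = \beta(\mathcal{G})$ that is announced in the paragraph preceding the theorem. The key observation is that, when the (Z-)eigenvalue is $0$, the defining equations of ordinary eigenvectors and Z-eigenvectors coincide up to a harmless normalization.

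First I would unpack both notions. A nonzero vector $x \in \mathbb{R}_+^n$ is a nonnegative eigenvector of $\mathcal{L}_\mathcal{G}$ for eigenvalue $0$ exactly when $\mathcal{L}_\mathcal{G} x^{k-1} = 0$, because the right-hand side $0 \cdot x^{[k-1]}$ vanishes; likewise, a nonzero $x \in \mathbb{R}_+^n$ is a nonnegative Z-eigenvector for Z-eigenvalue $0$ exactly when the same equation $\mathcal{L}_\mathcal{G} x^{k-1} = 0$ holds together with $x^{\mathrm{T}} x = 1$, since $0 \cdot x$ also vanishes. Thus the two eigenvector sets differ only by the scaling constraint $x^{\mathrm{T}} x = 1$.

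Next I would verify that this normalization does not affect the maximal number of linearly independent vectors. Given any nonnegative eigenvector $x$ for eigenvalue $0$, the positive rescaling $x/\|x\|_2$ is a nonnegative Z-eigenvector for Z-eigenvalue $0$; conversely, every Z-eigenvector is already an eigenvector. Since multiplication by positive scalars preserves both nonnegativity and linear independence, a maximal linearly independent family on one side produces one of the same cardinality on the other, yielding $\beta_Z(\mathcal{G}) = \beta(\mathcal{G})$. Combining this with Theorem~\ref{th2} then gives $\beta_Z(\mathcal{G}) = r$, where $r$ is the number of connected components of $\mathcal{G}$.

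I do not anticipate any real obstacle in executing this plan: the theorem is effectively a restatement of Theorem~\ref{th2}, and the only point requiring attention is the effect of the $x^{\mathrm{T}} x = 1$ constraint on counting linearly independent vectors, which is dispatched by the positive-scaling argument above.
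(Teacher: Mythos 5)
Your proposal is correct and follows essentially the same route as the paper: the paper also deduces the theorem from Theorem~\ref{th2} by noting that the sets $\{x:\mathcal{L}_{\mathcal{G}}x^{k-1}=0,\ x\in\mathbb{R}_+^n\}$ and $\{x:\mathcal{L}_{\mathcal{G}}x^{k-1}=0,\ x^{\mathrm{T}}x=1,\ x\in\mathbb{R}_+^n\}$ have the same maximal number of linearly independent vectors, hence $\beta_Z(\mathcal{G})=\beta(\mathcal{G})$. Your positive-rescaling argument merely makes explicit the step the paper labels ``obviously.''
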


%

\vspace{3mm}

Let ${\mathcal{G}}$ be a $k$-uniform hypergraph and  $\mathcal{A}_{\mathcal{G}}$ be its adjacency tensor.
Let $\beta_{\rho}(\mathcal{G})$  denote the  maximum number of linearly independent nonnegative eigenvectors of  $\mathcal{A}_{\mathcal{G}}$ corresponding to spectral radius $\rho(\mathcal{A}_{\mathcal{G}})$.
We have the following conclusion.
\begin{thm}\label{th5}
Let ${\mathcal{G}}$ be a $k$-uniform $d$-regular hypergraph. Then  the number of connected components of ${\mathcal{G}}$ is $\beta_{\rho}(\mathcal{G})$ .
\end{thm}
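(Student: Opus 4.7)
The plan is to reduce the statement to Theorem \ref{th2} by exploiting the $d$-regularity of $\mathcal{G}$ to convert between nonnegative eigenvectors of $\mathcal{A}_{\mathcal{G}}$ at the spectral radius and nonnegative eigenvectors of $\mathcal{L}_{\mathcal{G}}$ at $0$. The whole strategy rests on one algebraic identity: since $\mathcal{G}$ is $d$-regular, $\mathcal{D}_{\mathcal{G}}=d\mathcal{I}$, hence
\[
\mathcal{L}_{\mathcal{G}} \;=\; d\mathcal{I}-\mathcal{A}_{\mathcal{G}}.
\]

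First I would identify what $\rho(\mathcal{A}_{\mathcal{G}})$ is. The vector $\mathbf{e}=(1,\dots,1)^{\mathrm{T}}$ satisfies $\mathcal{A}_{\mathcal{G}}\mathbf{e}^{k-1}=d\,\mathbf{e}^{[k-1]}$ by regularity, so $(d,\mathbf{e})$ is an eigenpair of $\mathcal{A}_{\mathcal{G}}$ with a \emph{positive} eigenvector. Since $\mathcal{A}_{\mathcal{G}}\in\mathbb{R}_+^{[k,n]}$, Lemma~\ref{non} then forces $\rho(\mathcal{A}_{\mathcal{G}})=d$.

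Next I would observe that for any $x\in\mathbb{R}^n$ the identity $\mathcal{L}_{\mathcal{G}}x^{k-1}=d\,x^{[k-1]}-\mathcal{A}_{\mathcal{G}}x^{k-1}$ gives the equivalence
\[
\mathcal{A}_{\mathcal{G}}x^{k-1}=d\,x^{[k-1]} \iff \mathcal{L}_{\mathcal{G}}x^{k-1}=0.
\]
Combined with $\rho(\mathcal{A}_{\mathcal{G}})=d$, this means the set of nonnegative eigenvectors of $\mathcal{A}_{\mathcal{G}}$ corresponding to $\rho(\mathcal{A}_{\mathcal{G}})$ coincides exactly with the set of nonnegative eigenvectors of $\mathcal{L}_{\mathcal{G}}$ corresponding to $0$. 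Consequently the two maximal linearly independent collections have the same size, i.e.\ $\beta_{\rho}(\mathcal{G})=\beta(\mathcal{G})$. Applying Theorem~\ref{th2} completes the argument.

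There is really no hard step here; the proposition is a direct corollary of Theorem~\ref{th2}. The only small subtlety to be careful about is to check that the regularity hypothesis is used precisely to collapse $\mathcal{D}_{\mathcal{G}}$ to a scalar multiple of $\mathcal{I}$, and to invoke Lemma~\ref{non} (rather than Lemma~\ref{perron}) so that one does not need weak irreducibility of $\mathcal{A}_{\mathcal{G}}$—this matters since $\mathcal{G}$ may be disconnected, in which case $\mathcal{A}_{\mathcal{G}}$ is not weakly irreducible but $\mathbf{e}$ is still a positive eigenvector for $d$.
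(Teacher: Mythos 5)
Your proposal is correct and follows essentially the same route as the paper: use $d$-regularity to write $\mathcal{L}_{\mathcal{G}}=d\mathcal{I}-\mathcal{A}_{\mathcal{G}}$, identify $\rho(\mathcal{A}_{\mathcal{G}})=d$ via Lemma~\ref{non} applied to the positive eigenvector $\mathbf{e}$, deduce that nonnegative eigenvectors of $\mathcal{A}_{\mathcal{G}}$ at $d$ are exactly nonnegative eigenvectors of $\mathcal{L}_{\mathcal{G}}$ at $0$ so that $\beta_{\rho}(\mathcal{G})=\beta(\mathcal{G})$, and invoke Theorem~\ref{th2}. Your closing remark about why Lemma~\ref{non} suffices without weak irreducibility is a nice clarification the paper leaves implicit.
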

\begin{proof}
It's easy to check that $(d,\textbf{e})$ is an eigenpair of $\mathcal{A}_{\mathcal{G}}$, where $\textbf{e}=(1,1,\ldots,1)^{\mathrm{T}}\in \mathbb{R}^n$.
From Lemma \ref{non}, $d$ is the spectral radius of ${\mathcal{A}}_{\mathcal{G}}$.

Since ${\mathcal{G}}$ is a  $d$-regular hypergraph, we have its Laplacian tensor is $\mathcal{L}_{\mathcal{G}} =d \mathcal{I} -\mathcal{A}_{\mathcal{G}}$. Thus, $(\lambda,x)$ is an eigenpair of $\mathcal{A}_{\mathcal{G}}$ if and only if $(d-\lambda,x)$ is an eigenpair of $\mathcal{L}_{\mathcal{G}}$.
So, $(d,x)$ is an eigenpair of $\mathcal{A}_{\mathcal{G}}$ if and only if $(0,x)$ is an eigenpair of $\mathcal{L}_{\mathcal{G}}$.
Thus, $\beta_{\rho}(\mathcal{G})=\beta(\mathcal{G})$.

By Theorem \ref{th2}, the statement  holds.
\end{proof}
%

\begin{rem}
Let ${\mathcal{G}}$ is an ordinary graph with r connected components. Since Laplacian matrix $\mathcal{L}_{\mathcal{G}}$ is symmetric, the algebraic multiplicity and the geometry  multiplicity of Laplacian eigenvalue 0 are equal, are both r. It's easy to know that we can choose r nonnegative eigenvectors as the basis of characteristic subspace of Laplacian eigenvalue 0. Therefore, when ${\mathcal{G}}$ is an ordinary graph, Theorem \ref{th2} is the  Theorem \ref{Fiedler2}, and Theorem \ref{th3} is the Fiedler's result Theorem \ref{Fiedler1}, respectively.

Similarly,  when ${\mathcal{G}}$ is a $d$-regular ordinary graph with r connected components, we can choose r nonnegative eigenvectors as the basis of characteristic subspace of spectral radius. Thus, by Theorem \ref{th5}, we get that the result of regular graphs ``the number of connected components of $\mathcal{G}$ is equal to the algebraic multiplicity of  spectral radius   of the adjacency matrix \cite{introduction,Spectra-of-graphs}".

When ${\mathcal{G}}$ is a $k$-uniform  hypergraph, the maximum number of linearly independent nonnegative eigenvectors and the maximum number of linearly independent  eigenvectors corresponding to  $0$ of $\mathcal{L}_{\mathcal{G}}$ isn't equal.
For example, for $4$-uniform hypergragh $\mathcal{G}$ in Figure 1, it's easy to check that $(1,1,1,1)^{\mathrm{T}}$, $(1,1,-1,-1)^{\mathrm{T}}$, $(1,-1,1,-1)^{\mathrm{T}}$ and $(1,-1,-1,1)^{\mathrm{T}}$ are the linearly independent  eigenvectors  of $\mathcal{L}_{\mathcal{G}}$ corresponding to  $0$.
Thus, the geometry connectivity $\beta(\mathcal{G})$ isn't equal to the maximum number of linearly independent  eigenvectors corresponding to  $0$ of $\mathcal{L}_{\mathcal{G}}$.

\end{rem}

\end{spacing}


\vspace{3mm}
\noindent
\textbf{References}
\bibliographystyle{elsarticle-num}

\end{CJK*}
\end{document}